

\documentclass[11pt,reqno]{amsart}

\usepackage{hyperref}
\usepackage{color}
\usepackage{amssymb}

\usepackage{graphicx, comment}
\usepackage{enumerate}
\usepackage{ytableau}
\usepackage{comment}
\usepackage{amsmath}
\usepackage{mathtools}
\usepackage{graphicx}
\usepackage{tikz}
\usetikzlibrary{matrix,positioning}

\addtolength{\topmargin}{-.8cm}
\addtolength{\textheight}{1.6cm}
\addtolength{\oddsidemargin}{-1.45cm}
\addtolength{\evensidemargin}{-1.45cm}
\addtolength{\textwidth}{2.9cm}

\newtheorem{Theorem}{Theorem}
\newtheorem{Proposition}[Theorem]{Proposition}

\newtheorem{Lemma}[Theorem]{Lemma}

\theoremstyle{remark}
\newtheorem*{Remark}{Remark}

\newtheorem*{rems}{Remarks} 
\newenvironment{Remarks}{\begin{rems}\normalfont}{\end{rems}}

\numberwithin{equation}{section}

\allowdisplaybreaks 

\newcommand{\pqrfac}[3]{{\left({#1};#3\right)_{#2}}}

\newcommand{\elliptictheta}[1]{\theta\!\left({#1} ; p\right) }

\newcommand{\ellipticqrfac}[2]{{\left({#1}; q, p\right)_{#2}}} 

\def\ta{\theta}

\newcommand{\qrfac}[2]{{\left({#1}; q\right)_{#2}}} 

\author[G.~Bhatnagar]{Gaurav Bhatnagar
}

\address{Ashoka University, Sonipat, Haryana 131029, India}
\email{bhatnagarg@gmail.com}

\author[A.~Kumari]{Archna Kumari}
\address{Department of Mathematics, Indian Institute of Technology, Delhi 110067, India.}
\email{arcyadav856@gmail.com}

\author[M.\ J.\ Schlosser]{Michael J.\ Schlosser}
\address{Fakult\"at f\"ur Mathematik\\
Universit\"at Wien\\
Oskar-Morgenstern-Platz~1\\
A-1090 Vienna, Austria}
\email{michael.schlosser@univie.ac.at}

\title[An esoteric identity with many parameters]{An esoteric identity with many parameters and
other elliptic extensions of elementary identities }
 
\subjclass{Primary 11B65; Secondary 05A20, 11B83, 33D52}

\keywords{$q$-series, elliptic extensions}

\begin{document}

\begin{abstract}
We provide elliptic extensions of elementary identities such as the sum of the first $n$ odd or even numbers, the geometric sum and the sum of the first $n$ cubes.  Many such identities, and their $q$-analogues, are indefinite sums, and can be obtained from telescoping. So we used telescoping in our study to find elliptic extensions of these identities. In the course of our study, we obtained an identity with many parameters, which appears to be new even in the $q$-case. 
In addition, we recover some $q$-identities due to Warnaar. 
\end{abstract}

\maketitle

\section{Introduction}\label{intro}
The geometric sum is a staple of high school algebra. It can be written as
\begin{equation}\label{geo}
\sum_{k=0}^{n-1} q^k = \frac{1-q^{n}}{1-q} =:[n]_q,
\end{equation}
where $[n]_q$ denotes the $q$-number of $n$. 
This notation is justified because the limit as $q\to 1$ is
$$\sum_{k=0}^{n-1} 1 = n.$$
More generally, we can define $[z]_q:=(1-q^z)/(1-q)$ for any complex $z$
and observe that $\lim_{q\to 1}[z]_q=z$. 
Thus, we call $[z]_q$ the $q$-analogue of $z$.


The objective of this paper is to extend several classical and elementary identities to the so-called elliptic numbers---which are even more general than
the $q$-numbers---defined in \cite{SY2016b}. Rather surprisingly, these lead to new identities even in the $q$-case. This work is in the context of the
 rapidly developing field of elliptic combinatorics. Some recent references are  
 ~\cite{BCK2020a,   BCK2020b, DB2018, 
 BKS2023a, BGR2010, HKKS2023, MS2007, MS2020b, MS2023, SY2015,SY2016b, SY2016a,  SY2017b, SY2017a,  SY2018,  SY2021}.

To be able to define an elliptic number, we need some notation. 
The {\bf modified Jacobi theta function} of the complex number $a$
with (fixed) nome $p$ is defined as
\begin{equation*} \elliptictheta{a} := \prod\limits_{j=0}^{\infty} (1-ap^j) (1-p^{j+1}/a)
\end{equation*}
where $a\neq 0$ and $|p|<1$. 
When the nome $p=0$, the modified theta function $\elliptictheta{a}$ reduces to $(1-a)$.
We use the shorthand notation
\begin{equation*}
\elliptictheta{a_1, a_2, \dots, a_r} := \elliptictheta{a_1} \elliptictheta{a_2}\cdots  \elliptictheta{a_r}.
\end{equation*}

The elliptic analogue of a complex number $z$ is defined by \cite{SY2016b} as
\begin{subequations}
\begin{equation}\label{elliptic-n}
[z]_{a,b;q,p}:=\frac{\ta(q^z,aq^z,bq^2,a/b;p)}{\ta(q,aq,bq^{z+1},aq^{z-1}/b;p)}.
\end{equation}
This has additional (complex) parameters $a$ and $b$, in addition to the {\em base} $q$ and nome $p$. Note that 
$[0]_{a,b;q,p}=0$ and $[1]_{a,b;q,p}=1$. 
Let the elliptic weight be defined by
\begin{equation}\label{big-W}
W_{a,b;q,p}(k):=\frac{\ta(aq^{2k+1},bq,bq^2,aq^{-1}/b,a/b;p)}{\ta(aq,bq^{k+1},bq^{k+2},aq^{k-1}/b,aq^k/b;p)}q^k,
\end{equation}
for any $k$.
By the Weierstra{\ss} addition formula for theta functions (see \eqref{addf}, below) we have
\begin{equation}\label{recur1}
[x+y]_{a,b;q,p}=[x]_{a,b;q,p}+W_{a,b;q,p}(x)[y]_{aq^{2x},bq^x;q,p}.
\end{equation}
\end{subequations}
Note that if we set $p=0$ and subsequently take $a = 0$ and then $b = 0$, the elliptic weight in \eqref{big-W} reduces to $q^k$. In this case \eqref{recur1} reduces to the recurrence relation
$$[x+y]_{q}=[x]_{q}+q^x [y]_{q}.$$
This, along with the initial conditions $[0]_q=0$ and $[1]_q=1$, is used to define the $q$-number for integers. Thus, the elliptic number is indeed an extension of the $q$-number $[x]_q$ for any complex $x$.

The analogue of the geometric sum \eqref{geo}---obtained by iterating \eqref{recur1}---is as follows. (Here $n$ is assumed to be a non-negative integer.)
\begin{equation}
 1+W_{a,b;q,p}(1)+W_{a,b;q,p}(2)+\cdots+W_{a,b;q,p}(n-1)=[n]_{a,b;q,p}. \label{basic-g}
\end{equation}

Many such elementary results (even in the $q=1$ case) are examples of indefinite sums, and can be proved by telescoping, as has been shown in \cite{GB2011}. This motivates the study of elliptic extensions using these techniques. In doing so, we naturally came across the following result, which is somewhat esoteric, but appears to be new even in the $q$-case.
At this point, we would like to emphasize that the parameters in our identities
should  be chosen such that not-removable singularities and poles are avoided,
so that the identities make sense.  
\begin{Theorem}\label{big-theorem}
For any non-negative integer $n$ and complex numbers $c$, $d$, $g$ and $h$,
we have the following identity:
\begin{align}\label{bigid}
\sum_{k=0}^n&\Bigg(\frac{\big[2(gk+c)(hk+d)\big]_{a,b;q,p}\,
\big[2ghk+ch+dg\big]_{aq^{2(gk-g+c)(hk+d)},bq^{(gk-g+c)(hk+d)};q,p}}
{\big[2cd\big]_{a,b;q,p}\,
\big[ch+dg\big]_{aq^{2(c-g)d},bq^{(c-g)d};q,p}}\notag\\
&\times\prod_{j=0}^{k-1}
\frac{\big[(gj+g+c)(hj+d)\big]_{aq^{2(gj-g+c)(hj+d)},bq^{(gj-g+c)(hj+d)};q,p}}
{\big[(gj+g+c)(hj+d)\big]_{aq^{2(gj+g+c)(hj+2h+d)},
bq^{(gj+g+c)(hj+2h+d)};q,p}}\notag\\
&\times\prod_{j=0}^{k-1}
W_{aq^{2(gj+c)(hj+h+d)},bq^{(gj+c)(hj+h+d)};q,p}\big(2ghj+2gh+ch+dg\big)^{-1}\Bigg)\notag\\
={}&\frac{\big[(gn+c)(hn+h+d)\big]_{a,b;q,p}\,
\big[(g+c)d\big]_{aq^{2(c-g)d},bq^{(c-g)d};q,p}}
{\big[2cd\big]_{a,b;q,p}\,
\big[ch+dg\big]_{aq^{2(c-g)d},bq^{(c-g)d};q,p}}\notag\\
&\times
\prod_{j=1}^n\frac{\big[(gj+g+c)(hj+d)\big]_{aq^{2(gj-g+c)(hj+d)},bq^{(gj-g+c)(hj+d)};q,p}}
{\big[(gj+c)(hj-h+d)\big]_{aq^{2(gj+c)(hj+h+d)},bq^{(gj+c)(hj+h+d)};q,p}}\notag\\
&\times\prod_{j=1}^n
W_{aq^{2(gj-g+c)(hj+d)},bq^{(gj-g+c)(hj+d)};q,p}\big(2ghj+ch+dg\big)^{-1}\notag\\
&-\frac{\big[(c-g)d\big]_{a,b;q,p}\,
\big[c(d-h)\big]_{aq^{2c(h+d)},bq^{c(h+d)};q,p}}
{\big[2cd\big]_{a,b;q,p}\,
\big[ch+dg\big]_{aq^{2(c-g)d},bq^{(c-g)d};q,p}}\,
W_{aq^{2(c-g)d},bq^{(c-g)d};q,p}\big(ch+dg\big).
\end{align}
\end{Theorem}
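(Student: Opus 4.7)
The proof goes by telescoping, in keeping with the philosophy advertised in the introduction. Introduce the shorthand $P(m,n) := (gm+c)(hn+d)$; then all of the exotic-looking shifts $(a,b)\mapsto(aq^{2(\cdots)},bq^{(\cdots)})$ and arguments of elliptic numbers in \eqref{bigid} are small integer combinations of $P$-values. Two linear identities drive the argument:
\begin{equation*}
P(k,k{+}1) - P(k{-}1,k) = 2ghk+ch+dg,
\qquad
2(gk+c)(hk+d) = P(k{-}1,k) + P(k{+}1,k).
\end{equation*}
The first says that the argument $2ghk+ch+dg$ of the second elliptic number in the summand is exactly the ``increment'' needed to apply the addition formula \eqref{recur1} with $x=P(k-1,k)$; the second expresses the leading factor $[2(gk+c)(hk+d)]_{a,b;q,p}$ as an elliptic number of a sum.

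Guided by the form of the right-hand side of \eqref{bigid}, the plan is to exhibit a function $F(k)$ satisfying $s_k = F(k+1) - F(k)$ for $k\geq 0$, where $s_k$ denotes the $k$-th summand on the left-hand side, so that the total sum telescopes to $F(n+1)-F(0)$. The natural guess, read off from the principal term of the right-hand side, is
\begin{equation*}
F(k) = C\cdot [P(k{-}1,k)]_{a,b;q,p}\prod_{j=1}^{k-1}\frac{[P(j{+}1,j)]_{aq^{2P(j-1,j)},bq^{P(j-1,j)};q,p}}{[P(j,j{-}1)]_{aq^{2P(j,j+1)},bq^{P(j,j+1)};q,p}}\cdot W^{-1}_{aq^{2P(j-1,j)},bq^{P(j-1,j)};q,p}\!\bigl(2ghj+ch+dg\bigr),
\end{equation*}
for a constant prefactor $C$ fixed by the overall normalisation. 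The verification $s_k=F(k+1)-F(k)$ is then carried out by applying \eqref{recur1} with $(x,y) = (P(k-1,k),\,2ghk+ch+dg)$, which rewrites $[2ghk+ch+dg]_{aq^{2P(k-1,k)},bq^{P(k-1,k)};q,p}$ as $\bigl([P(k,k+1)]_{a,b;q,p}-[P(k-1,k)]_{a,b;q,p}\bigr)/W_{a,b;q,p}(P(k-1,k))$, and combining this with the decomposition of $[2(gk+c)(hk+d)]_{a,b;q,p}$ furnished by the second linear identity above.

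Finally, one must check that $F(0)$ reproduces the boundary term on the right-hand side. This reduces to the three-term relation
\begin{equation*}
[(c+g)d]_{aq^{2(c-g)d},bq^{(c-g)d};q,p} = [ch+dg]_{aq^{2(c-g)d},bq^{(c-g)d};q,p} + W_{aq^{2(c-g)d},bq^{(c-g)d};q,p}(ch+dg)\cdot[c(d-h)]_{aq^{2c(h+d)},bq^{c(h+d)};q,p},
\end{equation*}
a direct specialisation of \eqref{recur1} with $(x,y)=(ch+dg,\,c(d-h))$ at base $(aq^{2(c-g)d},bq^{(c-g)d})$. The principal obstacle throughout is combinatorial rather than conceptual: the subscripts $(aq^{2P(i,j)},bq^{P(i,j)})$ shift in several different ways across the products, and one must carefully align the running products $\prod_{j=0}^{k-1}$ appearing in $s_k$ with the target products $\prod_{j=1}^{n}$ on the right-hand side. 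Once the bookkeeping is organised, the identity drops out of repeated, mechanical use of \eqref{recur1}.
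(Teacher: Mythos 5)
Your overall strategy is the paper's strategy: the sum is telescoped, your $F(k+1)$ is exactly the quantity $\tfrac{1}{t_0}u_0u_1\cdots u_k/(v_1\cdots v_k)$ that Euler's lemma (Lemma~\ref{E-telescoping-lemma}) produces from the paper's choices of $u_k$, $v_k$, $t_k$, and the difference relation you need for $k\ge 1$ is precisely the paper's key identity \eqref{eq:telescope}. Your route to that identity is mildly different: instead of invoking the quadratic relation \eqref{quad-rel}, you use the linear addition formula \eqref{recur1}. This does work, but it takes \emph{three} applications, not the two you spell out: besides \eqref{recur1} at $x+y=P(k,k+1)$ and at $x+y=2(gk+c)(hk+d)$ (with $P(m,n)=(gm+c)(hn+d)$), you also need it at $x+y=P(k+1,k)=\big(2ghk+ch+dg\big)+P(k,k-1)$ in the shifted base $\big(aq^{2P(k-1,k)},bq^{P(k-1,k)}\big)$; this third application is what produces the factor $[P(k,k-1)]$ with base shifted by $P(k,k+1)$ together with the $W^{-1}$ that appear in your $F$, and without it the cancellation of the cross terms leaves an unidentified difference.

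The genuine gap is in the boundary bookkeeping. Once $C$ is fixed so that $s_k=F(k+1)-F(k)$ holds for $k\ge 1$ (equivalently, so that $F(n+1)$ is the principal term of \eqref{bigid}), namely $C=[(g+c)d]_{A,B;q,p}\big/\big([2cd]_{a,b;q,p}\,[ch+dg]_{A,B;q,p}\big)$ with $A=aq^{2(c-g)d}$, $B=bq^{(c-g)d}$, the relation \emph{fails} at $k=0$: the $k=0$ summand is $1$, while \eqref{recur1} gives $F(1)-F(0)=C\,W_{a,b;q,p}\big((c-g)d\big)[ch+dg]_{A,B;q,p}=1-[(c-g)d]_{a,b;q,p}/[2cd]_{a,b;q,p}$. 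Correspondingly, $F(0)=C\,[(c-g)d]_{a,b;q,p}$ is \emph{not} the subtracted term of \eqref{bigid}; in fact your own three-term relation shows the two differ by exactly $[(c-g)d]_{a,b;q,p}/[2cd]_{a,b;q,p}$, rather than proving them equal. The two slips compensate, so you land on the correct right-hand side, but the argument as written is broken at $k=0$. To repair it, either treat the $k=0$ term separately using the $k=0$ instance of the full two-factor difference identity
\begin{equation*}
[c(h+d)]_{a,b;q,p}\,[(g+c)d]_{A,B;q,p}-[(c-g)d]_{a,b;q,p}\,[c(d-h)]_{A',B';q,p}\,W_{A,B;q,p}(ch+dg)=[2cd]_{a,b;q,p}\,[ch+dg]_{A,B;q,p},
\end{equation*}
where $A'=aq^{2c(h+d)}$, $B'=bq^{c(h+d)}$ (your quoted relation is only one of the three linear ingredients of this identity, and by itself does not identify $F(0)$ with the boundary term), or, as the paper does, feed $u_k$, $v_k$, $t_k$ built from \emph{pairs} of elliptic numbers directly into Lemma~\ref{E-telescoping-lemma}, so that the boundary emerges automatically as $v_0/t_0$ and no separate $k=0$ check is needed.
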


The ``hypergeometric version'' of \eqref{bigid} is given by
\begin{align*}
\sum_{k=0}^n \frac{(gk+c)(hk+d)(2ghk+ch+dg)}{cd(ch+dg)}
= \frac{(gn+c)(hn+h+d)(gn+g+c)(hn+d)}{2cd(ch+dg)}\cr
-\frac{(d-h)(c-g)}{2(ch+dg)}.
\end{align*}
Note that this extends the well-known formula for the sum of the first $n$ cubes. Multiply both sides by $cd(ch+dg)/2$ and then take $c=d=0$, and 
$h=g=1$, to obtain
$$\sum_{k=0}^n k^3 =\bigg( \frac{n(n+1)}{2}\bigg)^2.$$
This is indeed an elementary identity, but 
its extension given in \eqref{bigid}
involves some rather unusual factors. 
Note, for example, the product
$$
\prod_{j=0}^{k-1}
\big[(gj+g+c)(hj+d)\big]_{aq^{2(gj-g+c)(hj+d)},bq^{(gj-g+c)(hj+d)};q,p}
$$
appearing with index $k$ in the sum. The associated $q$-product
(obtained by first letting $p\to 0$, followed by $a\to 0$ and $b\to 0$)
$$
t(k):=\prod_{j=0}^{k-1}
\big[(gj+g+c)(hj+d)\big]_q
$$
is rather unusual as it is not a $q$-hypergeometric term.
In particular, the ratio $t(k+1)/t(k)$ of this product, that is, 
$[(gk+g+c)(hk+d)]_q=(1-q^{(gk+g+c)(hk+d)})/(1-q)$, is not a rational function
in $q^k$; it is a rational function in $q^{k^2}$ and $q^k$, and
contains quadratic powers of $q$.

Nevertheless, \eqref{bigid} contains various extensions of well-known
elementary identities.  The following identities appear as special cases. 
\begin{subequations}
\begin{gather}\label{spc-4(i)}
\sum_{k=1}^n q^{n-k} \frac{[2k]_{q}}{[2]_{q}}  = \begin{bmatrix}n+1\\2\end{bmatrix}_{q}; 
\\
\label{spc-4(ii)}
\sum_{k=1}^n q^{n^2-k^2+n-k} \frac{[2k^2]_{q}[2k]_{q}}{[2]_{q}^2} =\bigg( \frac{[n(n+1)]_q}{[2]_q}\bigg)^2. 
\end{gather}
\end{subequations}
Here we have used the notation
$$\begin{bmatrix}n+1\\2\end{bmatrix}_{q} = \frac{[n]_q[n+1]_q}{[2]_2}.$$
The first of these is a $q$-analogue of the sum of the first $n$ natural numbers; the second is a $q$-analogue of the sum of the first $n$ cubes, which is
equivalent to a formula of Cigler~\cite[Theorem~1, $q\mapsto q^2$]{Cigler2014}.

We now provide some background information and list some notation used in this paper. 
\subsection*{Background information} \ 
\begin{enumerate}
\item 
Two important properties of the modified theta function are
\cite[Equation~(11.2.42)]{GR90}
\begin{subequations}
\begin{gather}\label{GR11.2.42}
\elliptictheta{a} =\elliptictheta{p/a} =-a\elliptictheta{1/a},
\end{gather}
and \cite[p.~451, Example~5]{WW1996}
\begin{gather}\label{addf}
\elliptictheta{xy,x/y,uv,u/v}-\elliptictheta{xv,x/v,uy,u/y}=\frac uy\,\elliptictheta{yv,y/v,xu,x/u}.
\end{gather}
\end{subequations}
This last formula is called the Weierstra{\ss} addition formula. This formula is used extensively in this paper. 
\item The following general theorem serves as a justification of
  referring to $[z]_{a,b;q,p}$, defined in \eqref{elliptic-n}, as an
  ``elliptic number''. 
\begin{Proposition}[{\cite[Theorem~1.3.3]{HR2016-lectures}}]\label{prop}
Let $g(x)$ be an elliptic function, that is, a~doubly periodic meromorphic function of the complex variable~$x$.
Then $g(x)$ is of the form:
\begin{gather*}
g(x)= \frac{\elliptictheta{ a_1q^x,a_2q^x,\dots,a_{r}q^x}} {\elliptictheta{b_1q^x,b_2q^x,\dots,b_rq^x}} c,
\end{gather*}
where $c$ is a constant, and
\begin{gather*}
a_1a_2\cdots a_{r}=b_1b_2\cdots b_r.
\end{gather*}
\end{Proposition}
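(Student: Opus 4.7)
The plan is the classical divisor-matching strategy: build an elliptic function $h$ of the stated form having the same zero/pole divisor as $g$ in a fundamental parallelogram, and then conclude $g/h=c$ by Liouville's theorem. Let $\omega_1,\omega_2$ be generators of the period lattice of $g$, normalised so that $q^{\omega_1}=1$ and $q^{\omega_2}=p$; this is the natural normalisation in the multiplicative setup, since it makes $q^x$ invariant under $\omega_1$ and sends $q^x\mapsto pq^x$ under $\omega_2$. By the usual Liouville theorems for elliptic functions, $g$ has the same number $r$ of zeros as poles in a fundamental parallelogram (counted with multiplicity), which I label $x_1,\dots,x_r$ and $y_1,\dots,y_r$, and the divisor satisfies $\sum_ix_i-\sum_jy_j\in\omega_1\mathbb{Z}+\omega_2\mathbb{Z}$.

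Using the freedom to shift any $y_j$ by a lattice vector (the shifted point represents the same pole of $g$), I would first choose representatives so that the sharper condition $\sum_ix_i-\sum_jy_j\in\omega_1\mathbb{Z}$ holds. Next I would record the quasi-periodicity
\begin{equation*}
\theta(pa;p)=-a^{-1}\theta(a;p),
\end{equation*}
which follows directly from the product definition of $\theta$ together with \eqref{GR11.2.42}. Setting $a_i:=q^{-x_i}$ and $b_j:=q^{-y_j}$, I would introduce the candidate
\begin{equation*}
h(x):=\frac{\theta(a_1q^x,\dots,a_rq^x;p)}{\theta(b_1q^x,\dots,b_rq^x;p)},
\end{equation*}
whose individual theta factors have zeros running through the full $\omega_1$-orbit of $x_i$ or $y_j$, matching the divisor of $g$.

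Ellipticity of $h$ then reduces to two easy checks. Invariance under $x\mapsto x+\omega_1$ is automatic because $q^x$ is unchanged. Under $x\mapsto x+\omega_2$, each factor $\theta(aq^x;p)$ picks up the multiplier $-(aq^x)^{-1}$; the $(-1)$'s cancel between the $r$ numerator and $r$ denominator factors, the $q^x$'s cancel as well, and the residual multiplier is $\prod_jb_j/\prod_ia_i=q^{\sum_ix_i-\sum_jy_j}=1$ by the choice of representatives made above. Hence $h$ is a meromorphic elliptic function with exactly the same zeros and poles as $g$ in a fundamental domain, so $g/h$ is entire and doubly periodic on $\mathbb{C}$, therefore constant by the original Liouville theorem. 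This yields the claimed representation, and the balancing condition $\prod_ia_i=\prod_jb_j$ is nothing but the $\omega_2$-periodicity constraint that emerged in the computation.

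The main obstacle is organisational: one has to track three isomorphic incarnations of the same lattice in parallel---the additive period lattice of $g$, the multiplicative lattice generated by $q^{\omega_1}=1$ and $q^{\omega_2}=p$ acting on the parameters $a_i,b_j$, and the zero lattice of an individual theta factor---and verify that Liouville's sum-of-zeros-versus-sum-of-poles constraint, after a judicious choice of lattice representatives for the $y_j$, becomes precisely $\prod_ia_i=\prod_jb_j$. Once this bookkeeping is in place, the ellipticity of $h$ is a one-line consequence of the single identity $\theta(pa;p)=-a^{-1}\theta(a;p)$, and Liouville's theorem finishes the argument.
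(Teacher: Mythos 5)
Your proof is correct and is the standard divisor-matching argument: build a ratio of theta functions with the same zeros and poles as $g$, fix the lattice representatives so that the multiplier under $x\mapsto x+\omega_2$ (coming from $\theta(pa;p)=-a^{-1}\theta(a;p)$) is $1$, and finish with Liouville. The paper does not prove this Proposition --- it is quoted from Rosengren's lecture notes --- and your argument is essentially the same as the proof given there, with the balancing condition $a_1\cdots a_r=b_1\cdots b_r$ correctly identified as the quasi-periodicity constraint equivalent to Liouville's sum-of-zeros-equals-sum-of-poles relation.
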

This last condition is the {\em elliptic balancing condition}. If we write $q=e^{2\pi i\sigma}$, $p=e^{2\pi i\tau}$, with complex $\sigma$, $\tau$, then $g(x)$ is indeed doubly periodic in $x$ with periods $\sigma^{-1}$ and $\tau\sigma^{-1}$. 
\item Using Proposition~\ref{prop}, it is easy to see that elliptic number
  $[z]_{a,b;q,p}$ is elliptic in $z$, and also elliptic in $\log_q a$ and in
  $\log_q b$.
  
\item Similarly, the elliptic weight function $W_{a,b;q,p}(k)$ is elliptic
  in $\log_q a$, $\log_q b$ and $k$ (regarded as a complex variable). 

\item The following useful properties readily follow from the definitions.
\begin{enumerate} 
\item For any $k$ and $l$,
$\displaystyle
W_{a,b;q,p}(k+l)=W_{a,b;q,p}(k)W_{aq^{2k},bq^k;q,p}(l).
$
\item 
$W_{a,b;q,p}(0)=1$, and for any $k$,
$
W_{a,b;q,p}(-k)=W_{aq^{-2k},bq^{-k};q,p}(k)^{-1}.
$
\item For any $x$,

$\displaystyle
[-x]_{a,b;q,p}=-W_{a,b;q,p}(-x)[x]_{aq^{-2x},bq^{-x};q,p}
=-W_{aq^{-2x},bq^{-x};q,p}(x)^{-1}[x]_{aq^{-2x},bq^{-x};q,p}.
$
\item For any $x$ and $y$,
$\displaystyle
[xy]_{a,b;q,p}=[x]_{a,b;q,p}[y]_{a,bq^{1-x};q^x,p}.
$
\item For any $r$, $x$ and $y$,
\begin{align}\label{quad-rel}
[x]_{a,b;q,p}[y]_{aq^{2r+2x-2y},bq^{r+x-y};q,p}-
[x+r]_{a,b;q,p}[y-r]_{aq^{2r+2x-2y},bq^{r+x-y};q,p}&\notag\\
=[r+x-y]_{a,b;q,p}[r]_{aq^{2x},bq^x;q,p}W_{aq^{2r+2x-2y},bq^{r+x-y};q,p}(y-r).&
\end{align}
\end{enumerate}
 The property \eqref{quad-rel} is a consequence of the Weierstra{\ss}
  addition formula in \eqref{addf}.

\item In \S\ref{sec3}, we require the notation of $q$-rising factorials and their elliptic analogues. 
We define the {\em $q$-shifted factorials}, for $k=0,1,2,\dots$, as
\begin{gather*}
\qrfac{a}{k} :=\prod\limits_{j=0}^{k-1} \big(1-aq^j\big),
\end{gather*}
and for $|q|<1$,
\begin{gather*}
\qrfac{a}{\infty} := \prod\limits_{j=0}^{\infty} \big(1-aq^j\big).
\end{gather*}
The parameter $q$ is called the {\em base}. With this definition, we can write the {modified Jacobi theta function}  as
\begin{gather*} \elliptictheta{a} = \pqrfac{a}{\infty}{p} \pqrfac{p/a}{\infty}{p},\end{gather*}
where $a\neq 0$ and $|p|<1$. We define the {\em $q, p$-shifted factorials} (or {\em theta shifted factorials}), for $k$ an integer, as
\begin{gather*}
\ellipticqrfac{a}{k} := \prod\limits_{j=0}^{k-1} \elliptictheta{aq^j}.
\end{gather*}
When the nome $p=0$, 
$\ellipticqrfac{a}{k}$ reduces to $ \pqrfac{a}{k}{q}$.

We use the shorthand notations
\begin{gather*}
\ellipticqrfac{a_1, a_2,\dots, a_r}{k} := \ellipticqrfac{a_1}{k} \ellipticqrfac{a_2}{k}\cdots \ellipticqrfac{a_r}{k},\\
\qrfac{a_1, a_2,\dots, a_r}{k} := \qrfac{a_1}{k} \qrfac{a_2}{k}\cdots \qrfac{a_r}{k}.
\end{gather*}

\item 
Most of the proofs of the theorems in this paper use the following technique, explained in detail in \cite[Theorem 3.3]{GB2011}.
\begin{Lemma}[Euler's telescoping lemma]\label{E-telescoping-lemma}
 Let $u_k$, $v_k$ and $t_k$ be three sequences, such that
$$t_k=u_k-v_k.$$ 
Then we have:
\begin{equation}\label{eq:E-telescoping-lemma}
\sum_{k=0}^n \frac{t_k}{t_0}\frac{u_0u_1\cdots u_{k-1}}{v_1v_2\cdots v_{k}}
= \frac{u_0}{t_0}\left( \frac{u_1u_2\cdots u_{n}}{v_1v_2\cdots v_{n}} - \frac{v_0}{u_0}\right),
\end{equation}
provided none of the denominators in \eqref{eq:E-telescoping-lemma} are zero.
\end{Lemma}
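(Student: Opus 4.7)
The plan is to recognize the identity as a pure telescoping statement. The natural partial sum to consider is
$$A_k := \frac{u_0 u_1 \cdots u_{k-1}}{v_0 v_1 \cdots v_{k-1}} \qquad (k\geq 0),$$
with $A_0 = 1$ by the empty-product convention. The key observation is the one-line calculation
$$A_{k+1}-A_k \;=\; \frac{u_0\cdots u_{k-1}}{v_0\cdots v_{k-1}}\cdot\frac{u_k-v_k}{v_k} \;=\; \frac{t_k}{v_0}\cdot\frac{u_0 u_1\cdots u_{k-1}}{v_1 v_2\cdots v_k},$$
where I use the hypothesis $t_k = u_k-v_k$ and pull one factor of $v_0$ out of the denominator. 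Up to the overall constant $v_0/t_0$, this is precisely the summand appearing on the left-hand side of \eqref{eq:E-telescoping-lemma}.

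Summing from $k=0$ to $k=n$ and applying the elementary identity $\sum_{k=0}^n(A_{k+1}-A_k) = A_{n+1}-A_0$ then yields
$$\sum_{k=0}^n \frac{t_k}{v_0}\cdot\frac{u_0\cdots u_{k-1}}{v_1\cdots v_k} \;=\; \frac{u_0 u_1\cdots u_n}{v_0 v_1\cdots v_n} \;-\; 1.$$
Multiplying both sides by $v_0/t_0$ and then factoring $u_0$ out of the numerator of the first term on the right produces
$$\frac{u_0}{t_0}\left(\frac{u_1 u_2\cdots u_n}{v_1 v_2\cdots v_n} - \frac{v_0}{u_0}\right),$$
which is exactly the right-hand side of \eqref{eq:E-telescoping-lemma}.

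There is no substantive obstacle here: the lemma is essentially a repackaging of the trivial identity $u_k/v_k - 1 = t_k/v_k$ in a form suited to iterated application. The only bookkeeping needed is to handle the empty products at $k=0$ correctly and to note that the stated non-vanishing hypothesis on the denominators is precisely what guarantees that every intermediate quantity ($A_k$, the summand, and the divisor $t_0$) is well defined. An alternative, completely equivalent route would be induction on $n$, using the same telescoping identity as the inductive step; either formulation produces a proof of at most a few lines.
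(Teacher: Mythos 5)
Your proof is correct: the telescoping of $A_k = u_0\cdots u_{k-1}/(v_0\cdots v_{k-1})$ via $A_{k+1}-A_k = (t_k/v_0)\,u_0\cdots u_{k-1}/(v_1\cdots v_k)$ is exactly the standard argument, and the empty-product bookkeeping at $k=0$ checks out. The paper itself gives no proof of this lemma, deferring to \cite[Theorem~3.3]{GB2011}, where the argument is essentially the same telescoping computation you wrote down.
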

\end{enumerate}

\subsection*{Some important specializations of the elliptic numbers and elliptic weights} It is helpful to explicitly write out some important special cases of the elliptic numbers and the elliptic weights. These cases correspond to $p=0$ (the ``$a,b;q$-case''); $p=0$ and $b\to 0$ (the ``$a;q$-case''); and, $p=0$ and $a\to 0$ (the ``$(b;q)$-case'').
 
The three special cases of the elliptic numbers are
\begin{subequations}
\begin{align} 
[z]_{a,b;q} &=\frac{(1-q^z)(1-aq^z)(1-bq^2)(1-a/b)} 
{(1-q)(1-aq)(1-bq^{z+1})(1-aq^{z-1}/b)}; \label{z-abq}\\
[z]_{a;q} &=\frac{(1-q^z)(1-aq^z)}
{(1-q)(1-aq)}q^{1-z}; \label{z-aq} \\
[z]_{(b;q)} &=\frac{(1-q^z)(1-bq^2)}
{(1-q)(1-bq^{z+1})}   \label{z-bq},
\end{align}
\end{subequations}
and called $a,b;q$-numbers, $a;q$-numbers, and $(b;q)$-numbers, respectively.
(We place parentheses in ``$(b;q)$-numbers'' but none in
``$a;q$-numbers'', to avoid confusion between the two special cases. 
This follows the notation used in \cite{SSU2021}.)

The corresponding special cases for the elliptic weight $W_{a,b;q,p}(k)$ are as follows:
\begin{subequations}
\begin{align} 
W_{a,b;q}(k) &=\frac{(1-aq^{2k+1})(1-bq)(1-bq^2)(1-aq^{-1}/b)(1-a/b)}
{(1-aq)(1-bq^{k+1})(1-bq^{k+2})(1-aq^{k-1}/b)(1-aq^k/b)}q^k; \label{W-abq}\\
W_{a;q}(k) &=\frac{(1-aq^{2k+1})}
{(1-aq)}q^{-k};\label{W-aq}\\
W_{(b;q)}(k) &=\frac{(1-bq)(1-bq^2)}
{(1-bq^{k+1})(1-bq^{k+2})}q^k  \label{W-bq}.
\end{align}
\end{subequations}

This paper is organized as follows. In Section~\ref{sec2}, we use Euler's telescoping lemma to find elliptic 
extensions of three elementary identities and discuss some interesting special cases. In Section~3, we consider elliptic extensions of several elementary identities that are obtained in an
analogous way to the $q$-identities previously obtained by one of us in \cite{MS2004}. Finally, in Section~4, we give the proof of Theorem~\ref{big-theorem} (achieved by combining
Lemma~\ref{E-telescoping-lemma} with the difference equation
\eqref{quad-rel}), and explicitly state a few noteworthy special cases.

\section{Elementary Examples}\label{sec2}
%
%

The purpose of this section is to extend three elementary identities to corresponding identities containing elliptic numbers. For each of these elliptic identities, we give some special cases for illustration.
The three identities are:
\begin{subequations}
\begin{equation}\label{sumofodd}
\sum_{k=0}^{n-1} (2k+1) = n^2;
\end{equation}
\begin{equation} \label{rising-fact}
\sum_{k=1}^n k(k+1)\cdots (k+m-1) = \frac{1}{m+1}\left( n(n+1)\cdots (n+m)\right);
\end{equation}
\begin{equation}\label{rising-fact2}
\sum_{k=1}^n \frac{1}{k(k+1)\cdots (k+m)} =\frac{1}{m}
\left( \frac{1}{m!} - \frac{1}{(n+1)(n+2)\cdots (n+m)}\right),
\end{equation}
\end{subequations}
where $m=1, 2, 3, \dots$.

First, we give an elliptic extension of the sum of the first $n$ odd integers.
\begin{Theorem} For $n$ a non-negative integer, we have 
\begin{equation}
\sum_{k=0}^{n} W_{a,b;q,p}(k)\bigg([k+1]_{a,b;q,p}[2]_{aq^{2k},bq^{k};q,p}-1\bigg)
=W_{a,b;q,p}(1)[n+1]_{a,b;q,p}[n+1]_{aq^2,bq;q,p}.\label{tel-c}
\end{equation}
\end{Theorem}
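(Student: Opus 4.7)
The plan is to break the sum into two pieces, each of which collapses via the single recurrence~\eqref{recur1}. For brevity in this sketch, write $[\cdot]$ for $[\cdot]_{a,b;q,p}$ and $W(\cdot)$ for $W_{a,b;q,p}(\cdot)$.

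First I would split the summand linearly:
\begin{align*}
\sum_{k=0}^{n} W(k)\bigl([k+1]\,[2]_{aq^{2k},bq^k;q,p}-1\bigr)
&=\sum_{k=0}^{n} W(k)[k+1]\,[2]_{aq^{2k},bq^k;q,p}-\sum_{k=0}^{n} W(k).
\end{align*}
The second piece is handled by the elliptic geometric series \eqref{basic-g} (with $n$ replaced by $n+1$), yielding $[n+1]$.

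For the first piece, I would apply the recurrence~\eqref{recur1} with $x=k$ and $y=2$ to obtain
$W(k)[2]_{aq^{2k},bq^k;q,p}=[k+2]-[k]$. The $k$th summand then becomes $[k+1][k+2]-[k][k+1]$, and the sum telescopes to $[n+1][n+2]-[0][1]=[n+1][n+2]$, since $[0]=0$. Combining the two evaluations gives
$$\text{LHS of \eqref{tel-c}}=[n+1][n+2]-[n+1]=[n+1]\bigl([n+2]-1\bigr).$$

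A final application of \eqref{recur1}, this time with $x=1$ and $y=n+1$, reads $[n+2]=1+W(1)[n+1]_{aq^2,bq;q,p}$, so $[n+2]-1=W(1)[n+1]_{aq^2,bq;q,p}$. Substituting produces precisely the right-hand side of \eqref{tel-c}. I do not anticipate a real obstacle; the only mild subtlety is to notice in advance that the inconspicuous $-1$ sitting inside the summand is exactly the amount needed so that, after the recurrence is used to rewrite $W(k)[2]_{aq^{2k},bq^k;q,p}$, the whole sum becomes a clean two-term telescope.
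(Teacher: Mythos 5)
Your proof is correct, but it follows a genuinely different route from the paper. The paper applies Euler's telescoping lemma (Lemma~\ref{E-telescoping-lemma}) to the single sequence $u_k=[k+1]_{a,b;q,p}[k+1]_{aq^2,bq;q,p}$ (with $v_k=u_{k-1}$, so that the products in \eqref{eq:E-telescoping-lemma} collapse); the crux there is computing $u_k-u_{k-1}$, which requires the quadratic theta relation $[k]_{a,b;q,p}[k]_{aq^2,bq;q,p}=[k+1]_{a,b;q,p}[k-1]_{aq^2,bq;q,p}+W_{aq^2,bq;q,p}(k-1)$ (a Weierstra{\ss}-type identity in the spirit of \eqref{quad-rel}), followed by a final multiplication by $W_{a,b;q,p}(1)$ using the cocycle property $W_{a,b;q,p}(1)W_{aq^2,bq;q,p}(k-1)=W_{a,b;q,p}(k)$. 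You instead split the summand linearly, dispose of $\sum_k W_{a,b;q,p}(k)$ via the elliptic geometric sum \eqref{basic-g}, telescope $[k]_{a,b;q,p}[k+1]_{a,b;q,p}$ using only the linear recurrence \eqref{recur1} (with $x=k$, $y=2$), and then reassemble the factored right-hand side with one more application of \eqref{recur1} ($x=1$, $y=n+1$). All steps check out: $\sum_{k=0}^{n}W(k)=[n+1]$ since $W(0)=1$, the telescope gives $[n+1][n+2]$ since $[0]=0$, and $[n+2]-1=W(1)[n+1]_{aq^2,bq;q,p}$ since $[1]=1$. What your approach buys is that it avoids any quadratic theta identity, using only ingredients \eqref{recur1} and \eqref{basic-g} already displayed in the introduction; what the paper's approach buys is that it lands directly on the right-hand side in its factored form $[n+1]_{a,b;q,p}[n+1]_{aq^2,bq;q,p}$ and fits the uniform telescoping template used for all the other theorems in that section.
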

\begin{proof}
We apply Lemma~\ref{E-telescoping-lemma} and take
\begin{gather*}
u_k =  [k+1]_{a,b;q,p}[k+1]_{aq^2,bq;q,p};\\
v_k = u_{k-1} =[k]_{a,b;q,p}[k]_{aq^2,bq;q,p}=[k+1]_{a,b;q,p}[k-1]_{aq^2,bq;q,p}+W_{aq^2,bq;q,p}(k-1).\\
\intertext{Thus}
t_k =u_k-v_k = W_{aq^2,bq;q,p}(k-1)\bigg([k+1]_{a,b;q,p}[2]_{aq^{2k},bq^{k};q,p}-1\bigg)
\text{ and } 
t_0=u_0-v_0= 1.
\end{gather*}
We thus obtain \eqref{eq:E-telescoping-lemma} with these choices of
$u_k$, $v_k$ and $t_k$. Multiplication of both sides of the
identity by $W_{a,b;q,p}(1)$ gives the result. 
\end{proof}

\begin{Remark}
The elliptic analogue of $n$, namely, $[n]_{a,b;q,p}$
contains extensions of $n^2$ and of $\binom{n+1}2$,
besides other extensions. Take $z=n$ in \eqref{z-aq},
the $a;q$-number of $n$.
For $a\to\infty$ this reduces to $[n]_q$,
for $a=1$ to $([n]_q)^2 q^{1-n}$, and for $a=q$ to
$q^{1-n}[n]_q[n+1]_q/[2]_q$.
That is, the telescoping sum over odd elliptic numbers
also extends a sum over odd squares, and to a sum over
binomial coefficients. The examples in this section illustrate
some of the possibilities to obtain interesting identities by specialization.
\end{Remark}

\subsection*{Special cases of \eqref{tel-c} }

\begin{enumerate} 
\item Three immediate specializations of \eqref{tel-c} are as follows. 
\begin{itemize}
\item For the $a,b;q$-analogue, take $p=0$. 
\begin{equation}\label{tel-c-a-b}
\sum_{k=0}^{n}  W_{a,b;q}(k)([k+1]_{a,b;q} [2]_{aq^{2k}, bq^{k};q}-1)
=W_{a,b;q}(1) [n+1]_{a,b;q}[n+1]_{aq^2,bq;q}.
\end{equation}
This identity has two parameters, $a$ and $b$, in addition to the base $q$. 
\item For the $a;q$-analogue, take ${b \to 0}$ or ${b \to \infty}$
  in \eqref{tel-c-a-b}. This gives
\begin{equation}\label{tel-c-a}
\sum_{k=0}^{n}  W_{a;q}(k)([k+1]_{a;q} [2]_{aq^{2k};q}-1)
=W_{a;q}(1) [n+1]_{a;q}[n+1]_{aq^2;q}.
\end{equation}
\item For the $(b;q)$-analogue, take ${a \to 0}$ or ${a \to \infty}$
  in \eqref{tel-c-a-b}. This gives 
\begin{equation}\label{tel-c-b}
\sum_{k=0}^{n}  W_{(b;q)}(k)([k+1]_{(b;q)} [2]_{(bq^{k};q)}-1)
=W_{(b;q)}(1) [n+1]_{(b;q)}[n+1]_{(bq;q)}.
\end{equation}
\end{itemize}

\item We further specialize $a$ and $b$ to obtain two new $q$-analogues of \eqref{sumofodd}. 
\begin{itemize}
\item Take ${a \to \infty}$ in \eqref{tel-c-a}, or ${b \to 0}$ in
  \eqref{tel-c-b}, to get
\begin{equation*}\label{sp1}
\sum_{k=0}^{n}  q^{k-1}([2]_{q}[k+1]_{q}-1)
= [n+1]_{q}^2.
\end{equation*}
\item When ${a \to 0}$ in \eqref{tel-c-a}, or $b\to\infty$ in \eqref{tel-c-b}, to obtain
\begin{equation*}\label{sp2}
\sum_{k=0}^{n}  q^{2n-2k}([2]_{q}[k+1]_{q}-q^{k+1})
= [n+1]_{q}^2.
\end{equation*}

\end{itemize}
\item Take $a\to 1$ in \eqref{tel-c-a}, respectively, $b\to 1$ in \eqref{tel-c-b}, to obtain the following pair of identities: 
\begin{gather*}
\sum_{k=0}^{n}  q^{2n-2k}\bigg([2]_{q}[k+1]_{q}^2[2k+2]_{q}-q^{k+1}[2k+1]_{q}\bigg)
= [n+1]_{q}^3[n+3]_{q};\\
\sum_{k=0}^{n}  \frac{q^{k-1}}{[k+1]_{q}[k+2]_{q}}\bigg(\frac{[k+1]_{q}[2]_{q}^2}{[k+3]_{q}}-1\bigg)
= \frac{[n+1]_{q}^2}{[n+2]_{q}[n+3]_{q}}.
\end{gather*}

\item Next, take $a\to q$ in \eqref{tel-c-a}, respectively, $b\to q$ in \eqref{tel-c-b}, to obtain the following pair of identities:
\begin{gather*}
\sum_{k=0}^{n}   q^{2n-2k}\bigg([k+1]_{q}[k+2]_{q}[2k+3]_{q}-q^{k+1}[2k+2]_{q}\bigg)
= \frac{[n+1]_{q}^2[n+2]_{q}[n+4]_{q}}{[2]_{q}};\\
\sum_{k=0}^{n}  \frac{q^{k-1}}{[k+2]_{q}[k+3]_{q}}\bigg(\frac{[k+1]_{q}[2]_{q}[3]_{q}}{[k+4]_{q}}-1\bigg)
= \frac{[n+1]_{q}^2}{[n+3]_{q}[n+4]_{q}}.
\end{gather*}
\end{enumerate}

%

Next, we give an elliptic extension of \eqref{rising-fact}.

\begin{Theorem} For $n, m$ non-negative integers, we have 
\begin{align}
\sum_{k=0}^n & W_{{a,b;q,p}}(k)[m+1]_{aq^{2k},bq^k;q,p}
 \Big( [k+1]_{a,b;q,p}[k+2]_{a,b;q,p}\dots [k+m]_{a,b;q,p}\Big) \cr
&=[n+1]_{a,b;q,p}[n+2]_{a,b;q,p}\dots [n+m+1]_{a,b;q,p} \label{tel-a}.
\end{align}
\end{Theorem}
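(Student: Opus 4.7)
The plan is to recognize the summand as the difference of two consecutive terms in a natural product, and then telescope. Concretely, define
$$P_k := [k+1]_{a,b;q,p}\,[k+2]_{a,b;q,p}\cdots [k+m+1]_{a,b;q,p},$$
so that $P_n$ is precisely the desired right-hand side. Since $[0]_{a,b;q,p}=0$, the product $P_{-1}=[0][1]\cdots[m]=0$. Therefore, if we can show that the summand on the left equals $P_k-P_{k-1}$, the telescoping sum $\sum_{k=0}^n(P_k-P_{k-1})=P_n-P_{-1}=P_n$ immediately gives the identity. This also matches the shape of Lemma~\ref{E-telescoping-lemma} (Euler's telescoping lemma) with the choices $u_k=P_k$ and $v_k=P_{k-1}$, exactly as in the proof of \eqref{tel-c}.

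The one non-trivial step is the algebraic simplification
$$P_k-P_{k-1}=[k+1]_{a,b;q,p}\cdots[k+m]_{a,b;q,p}\cdot\bigl([k+m+1]_{a,b;q,p}-[k]_{a,b;q,p}\bigr),$$
combined with the Weierstra\ss\ addition formula in the form \eqref{recur1}. Setting $x=k$ and $y=m+1$ in \eqref{recur1}, I get
$$[k+m+1]_{a,b;q,p}=[k]_{a,b;q,p}+W_{a,b;q,p}(k)\,[m+1]_{aq^{2k},bq^k;q,p},$$
so that
$$[k+m+1]_{a,b;q,p}-[k]_{a,b;q,p}=W_{a,b;q,p}(k)\,[m+1]_{aq^{2k},bq^k;q,p}.$$
Substituting this into the factored expression for $P_k-P_{k-1}$ recovers precisely the summand appearing on the left-hand side of \eqref{tel-a}.

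Telescoping then finishes the argument: summing $k$ from $0$ to $n$ gives $P_n-P_{-1}=P_n$, which is the right-hand side. I do not foresee a hard step; the only thing to watch for is the boundary term vanishing, which follows from $[0]_{a,b;q,p}=0$. In the Euler-lemma formulation, the analogous check is that $t_0=u_0-v_0=u_0-P_{-1}=u_0$, consistent with $W_{a,b;q,p}(0)=1$, so that the $k=0$ summand indeed equals $u_0=P_0=[1]\cdots[m+1]$, as required.
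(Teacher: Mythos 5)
Your proposal is correct and is essentially the paper's own argument: the paper applies Euler's telescoping lemma with exactly your choices $u_k=P_k$, $v_k=u_{k-1}=P_{k-1}$, the boundary term vanishing because $[0]_{a,b;q,p}=0$, and your use of \eqref{recur1} with $x=k$, $y=m+1$ just makes explicit the simplification of $t_k=u_k-v_k$ that the paper states without detail.
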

\begin{proof}
We apply Lemma~\ref{E-telescoping-lemma} and take
\begin{align*}
u_k &=  [k+1]_{a,b;q,p}[k+2]_{a,b;q,p}\dots [k+m+1]_{a,b;q,p};\\
v_k = u_{k-1} &= [k]_{a,b;q,p}[k+1]_{a,b;q,p}\dots [k+m]_{a,b;q,p},\\
\intertext{so that,} 
t_k &= W_{a,b;q,p}(k) [m+1]_{aq^{2k},bq^k;q,p} \Big( [k+1]_{a,b;q,p}[k+2]_{a,b;q,p}\dots [k+m]_{a,b;q,p}
\Big).
\end{align*}
With these substitutions, we have \eqref{eq:E-telescoping-lemma} which immediately gives us
\eqref{tel-a}.
\end{proof}

We take $m=1$ and shift the index $k\mapsto k-1$ and replace $n$ by $n-1$ in \eqref{tel-a}, to get the elliptic analogue of the sum of first $n$ even integers.
\begin{equation}\label{Elliptic-sumofeven}
\sum_{k=1}^{n}  W_{a,b;q,p}(k-1)[2]_{aq^{2k-2},bq^{k-1};q,p} [k]_{a,b;q,p} 
= [n]_{a,b;q,p}[n+1]_{a,b;q,p}.
\end{equation}
This can be regarded to be an elliptic extension of the formula for the sum of the first $n$ natural numbers:
\begin{equation}\label{triangular}
1+2+3+\cdots+n = \frac{n(n+1)}2.
\end{equation}
We list further special cases of the elliptic analogue of this elementary identity below. 

\subsection*{Special cases of \eqref{Elliptic-sumofeven}}
\begin{enumerate}
\item For the $a,b;q$-analogue, take $p=0$.
\begin{equation}\label{even-a-b-q}
\sum_{k=1}^{n}  W_{a,b;q}(k-1)[2]_{aq^{2k-2}, bq^{k-1};q}[k]_{a,b;q} 
=[n]_{a,b;q}[n+1]_{a,b;q}.
\end{equation}
\item For the $a;q$-analogue, take ${b \to 0}$ or ${b \to \infty}$ in
  \eqref{even-a-b-q}. This gives
\begin{equation}\label{even-a-q}
\sum_{k=1}^{n}  W_{a;q}(k-1)[2]_{aq^{2k-2};q} [k]_{a;q} 
= [n]_{a;q}[n+1]_{a;q}.
\end{equation}
\item For the $(b;q)$-analogue, take ${a \to 0}$ or ${a \to \infty}$ in
  \eqref{even-a-b-q}. This gives
\begin{equation}\label{even-b-q}
\sum_{k=1}^{n}  W_{(b;q)}(k-1)[2]_{(bq^{k-1};q)} [k]_{(b;q)} 
= [n]_{(b;q)}[n+1]_{(b;q)}.
\end{equation}
\item Two  $q$-analogues of \eqref{triangular}
\begin{itemize}
\item  Take the limit ${a \to \infty}$ in \eqref{even-a-q}, or ${b \to 0}$ in \eqref{even-b-q}:
\begin{equation*}
\sum_{k=1}^{n}  q^{k-1}[k]_{q} 
= \begin{bmatrix}n+1\\2\end{bmatrix}_{q}.
\end{equation*}
\item A $q$-analogue due to Warnaar \cite[Eq.\ 2]{SOW2004}: Take the limit ${a \to 0}$ in \eqref{even-a-q}, or ${b \to \infty}$ in \eqref{even-b-q}.
\begin{equation*}
\sum_{k=1}^{n} q^{2n-2k}[k]_{q}
=\begin{bmatrix}n+1\\2\end{bmatrix}_{q}.
\end{equation*}
\end{itemize}
\item Some assorted $q$-analogues.
\begin{itemize}
\item A $q$-analogue of the formula for the sum of cubes due to Warnaar~\cite[Eq.\ 2]{SOW2004}: take ${a \to 1}$ in \eqref{even-a-q}.
\begin{equation}\label{warnaar-cubes}
\sum_{k=1}^{n} q^{2n-2k}\frac{[k]_{q}^2[2k]_{q}}{[2]_{q}}
=\begin{bmatrix}n+1\\2\end{bmatrix}_{q}^2.
\end{equation}

\item Take ${b \to 1}$ in \eqref{even-b-q}.
\begin{equation*}
\sum_{k=1}^{n} q^{k-1} \frac{[2]_{q}}{[k+1]_{q}[k+2]_{q}}
=  \frac{[n]_{q}}{[n+2]_{q}}.
\end{equation*}
\item Take ${a \to q}$ in \eqref{even-a-q}.
\begin{equation*}
\sum_{k=1}^{n} q^{2n-2k}[k]_{q}[k+1]_{q}[2k+1]_{q}
=\frac{[n]_{q}[n+1]_{q}^2[n+2]_{q}}{[2]_{q}}.
\end{equation*}
\item Take ${b \to q}$ in \eqref{even-b-q}.
\begin{equation*}
\sum_{k=1}^{n} q^{k-1} \frac{[2]_{q}^2[k]_{q}}{[k+1]_{q}[k+2]_{q}[k+3]_{q}}
= \frac{[n]_{q}[n+1]_{q}}{[n+2]_{q}[n+3]_{q}}.
\end{equation*}
\end{itemize}
\end{enumerate}


\begin{Remark} There is another $q$-analogue of the sum of the first $n$ cubes given by Garrett and Hummel~\cite[Equation 2]{GH2004}. This can also be obtained by telescoping (take $u_k=(1-q^{k+2})$ and $v_k=-(1-q^k)$ in Lemma~\ref{E-telescoping-lemma}). Their elliptic extensions are immediate and are not included here. Further such $q$-analogues are obtained by Cigler~\cite{Cigler2014}, again by telescoping. 

\end{Remark}

Now, instead of taking $m=1$ in \eqref{tel-a}, we
take $m=2$, shift the index $k\mapsto k-1$ in \eqref{tel-a}
and replace $n$ by $n-1$. We then obtain
\begin{align}\label{m3rising2}
\sum_{k=1}^{n}  &W_{a,b;q,p}(k-1)[3]_{aq^{2k-2},bq^{k-1};q,p} [k]_{a,b;q,p} [k+1]_{a,b;q,p}\cr
&= [n]_{a,b;q,p}[n+1]_{a,b;q,p}[n+2]_{a,b;q,p}.
\end{align}

\subsection*{Some special cases of \eqref{m3rising2} }
We note some special cases of the $a;q$-special case of \eqref{m3rising2}
(which is obtained by first letting $p\to 0$, followed by letting $b\to 0$
in \eqref{m3rising2}), i.e.,
\begin{equation}\label{m3rising2a}
\sum_{k=1}^{n}  W_{a;q}(k-1)[3]_{aq^{2k-2};q} [k]_{a;q} [k+1]_{a;q}
= [n]_{a;q}[n+1]_{a;q}[n+2]_{a;q}.
\end{equation}
\begin{itemize}
\item Take ${a \to 0}$ in \eqref{m3rising2a} to obtain
\begin{equation*}
\sum_{k=1}^{n} q^{3n-3k}[k]_{q}[k+1]_{q}
=\frac{[n]_{q}[n+1]_{q}[n+2]_{q}}{[3]_{q}}.
\end{equation*}

\item Take ${a \to 1}$ in \eqref{m3rising2a} to obtain
\begin{equation*}
\sum_{k=1}^{n} q^{3n-3k}([k]_{q}[k+1]_{q})^2[2k+1]_{q}
=\frac{([n]_{q}[n+1]_{q}[n+2]_{q})^2}{[3]_{q}}.
\end{equation*}
\item Next, take ${a \to q}$ in \eqref{m3rising2a} to obtain
\begin{equation*}
\sum_{k=1}^{n} q^{3n-3k}[k]_{q}[k+1]_{q}^2[k+2]_{q}[2k+2]_{q}
=\frac{[n]_{q}[n+1]_{q}^2[n+2]_{q}^2[n+3]_{q}}{[3]_{q}}.
\end{equation*}
\item The following pair of identities is obtained by first
  replacing $q$ by $q^2$ and then letting $a\to q$, respectively, 
$a \to q^{-1}$:
\begin{align*}
\sum_{k=1}^{n} &q^{6n-6k}[2k]_{q}[2k+1]_{q}[2k+2]_{q}[2k+3]_{q}[4k+3]_{q}\\
&=\frac{[2n]_{q}[2n+1]_{q}[2n+2]_{q}[2n+3]_{q}[2n+4]_{q}[2n+5]_{q}} {[6]_{q}}.
\end{align*}
\begin{align*}
\sum_{k=1}^{n}& q^{6n-6k}[2k-1]_{q}[2k]_{q}[2k+1]_{q}[2k+2]_{q}[4k+1]_{q}\\
&=\frac{[2n-1]_{q}[2n]_{q}[2n+1]_{q}[2n+2]_{q}[2n+3]_{q}[2n+5]_{q}} {[6]_{q}}.
\end{align*}

\end{itemize}

Finally, before closing this section, we note the elliptic extension of \eqref{rising-fact2}.

\begin{Theorem} For $n, m$ non-negative integers, we have, 
\begin{align}
\sum_{k=1}^n  &\frac{W_{a,b;q,p}(k)\, [m]_{aq^{2k},bq^k;q,p}}
{ [k]_{a,b;q,p}[k+1]_{a,b;q,p}\dots [k+m]_{a,b;q,p}} \cr
&= \bigg(\frac{1}{[m]_{a,b;q,p}! }-\frac{1}{[n+1]_{a,b;q,p}[n+2]_{a,b;q,p}\dots [n+m]_{a,b;q,p}}\bigg),\label{tel-b}
\end{align}
where $[m]_{a,b;q,p}! :=[m]_{a,b;q,p}[m-1]_{a,b;q,p}\cdots[1]_{a,b;q,p}$
is an elliptic analogue of the factorial of $m$.
\end{Theorem}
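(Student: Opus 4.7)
The plan is to prove \eqref{tel-b} by direct telescoping, with the elliptic addition formula \eqref{recur1} providing the one non-trivial step. This is the elliptic analogue of the partial-fractions trick underlying \eqref{rising-fact2}, and it is the same spirit of argument used throughout Section \ref{sec2}; however, here a direct telescoping is cleaner than invoking Lemma \ref{E-telescoping-lemma}.

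Concretely, I would introduce
\[
S(k):=\frac{1}{[k]_{a,b;q,p}[k+1]_{a,b;q,p}\cdots[k+m-1]_{a,b;q,p}}
\]
and compute the difference $S(k)-S(k+1)$ by clearing denominators. This yields
\[
S(k)-S(k+1)=\frac{[k+m]_{a,b;q,p}-[k]_{a,b;q,p}}{[k]_{a,b;q,p}[k+1]_{a,b;q,p}\cdots[k+m]_{a,b;q,p}}.
\]
Applying \eqref{recur1} with $x=k$ and $y=m$ rewrites the numerator as $W_{a,b;q,p}(k)\,[m]_{aq^{2k},bq^k;q,p}$, which is precisely the summand appearing on the left-hand side of \eqref{tel-b}.

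Finally, summing from $k=1$ to $n$, the left-hand side telescopes to $S(1)-S(n+1)$. Using $[1]_{a,b;q,p}=1$, one has $S(1)=1/[m]_{a,b;q,p}!$ and $S(n+1)=1/\bigl([n+1]_{a,b;q,p}[n+2]_{a,b;q,p}\cdots[n+m]_{a,b;q,p}\bigr)$, which is exactly the right-hand side. There is no real obstacle here; the only non-elementary input is the identification of $[k+m]_{a,b;q,p}-[k]_{a,b;q,p}$ with $W_{a,b;q,p}(k)\,[m]_{aq^{2k},bq^k;q,p}$ via \eqref{recur1}, itself a direct consequence of the Weierstra{\ss} addition formula \eqref{addf} already recorded in the introduction.
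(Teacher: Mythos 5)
Your proof is correct and is essentially the paper's argument: the paper also telescopes the sequence $1/\big([k+1]_{a,b;q,p}\cdots[k+m]_{a,b;q,p}\big)$, with the same key step of rewriting the difference of consecutive terms via \eqref{recur1} as $W_{a,b;q,p}(k)\,[m]_{aq^{2k},bq^k;q,p}$ over the product $[k]_{a,b;q,p}\cdots[k+m]_{a,b;q,p}$. The only difference is presentational: the paper routes the telescoping through Lemma~\ref{E-telescoping-lemma} with $v_k=u_{k-1}$ (followed by an index shift), whereas you sum the differences $S(k)-S(k+1)$ directly, which amounts to the same computation.
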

\begin{proof}
We apply Lemma~\ref{E-telescoping-lemma} and take
\begin{align*}
u_k &=  \frac{1}{[k+2]_{a,b;q,p}[k+3]_{a,b;q,p}\dots [k+m+1]_{a,b;q,p}},\\
v_k = u_{k-1} &= \frac{1}{[k+1]_{a,b;q,p}[k+2]_{a,b;q,p}\dots [k+m]_{a,b;q,p}};\\
\intertext{so that} 
t_k &= \frac{-W_{a,b;q,p}(k+1) [m]_{aq^{2k+2},bq^{k+1};q,p}}{ [k+1]_{a,b;q,p}[k+2]_{a,b;q,p}\dots [k+m+1]_{a,b;q,p}
}
\text{ and } 
t_0 &= \frac{-W_{a,b;q,p}(1)[m]_{aq^2,bq;q,p}}{[m+1]_{a,b;q,p}!}.
\end{align*}
With these substitutions, we have \eqref{eq:E-telescoping-lemma}, and
after replacing $n$ by $n-1$ and shifting the index of the sum (such that
$k$ runs from $1$ to $n$, instead of from $0$ to $n-1$)
we readily obtain \eqref{tel-b}.
\end{proof}

%
%

\section{Special cases of elliptic and multibasic hypergeometric series identities}\label{sec3}
 In \cite{MS2004}, 
 the indefinite summation formula
\begin{equation}
\sum_{k=0}^{n} \frac{(1-aq^{2k})}{(1-a)} \frac{\qrfac{a, b}{k}}{\qrfac{q, aq/b}{k}} b^{n-k} = \frac{\qrfac{aq, bq}{n}}{\qrfac{q, aq/b}{n}}\label{indef-sum-1}
\end{equation}
is used to obtain $q$-analogues of several elementary sums. This includes Warnaar's \cite{SOW2004} $q$-analogue of the sum of the first $n$ cubes. 
In this section, we use the same idea, but use the following elliptic analogue of \eqref{indef-sum-1}:
\begin{align}\label{E-indef-sum-1}
\sum_{k=0}^{n} \frac{\theta(aq^{2k};p^2)}{\theta(a;p^2)} \frac{\pqrfac{a,b,cp}{k}{q;p^2}}{\pqrfac{q, aq/b, bcpq}{k}{q,p^2}} \frac{\pqrfac{bcp/a}{k}{q^{-1},p^2}}{\pqrfac{cp/aq}{k}{q^{-1},p^2}} b^{n-k}\cr
= \frac{\pqrfac{aq,bq,cpq}{n}{q,p^2}}{\pqrfac{q, aq/b, bcpq}{n}{q,p^2}}\frac{\pqrfac{bcp/aq}{n}{q^{-1},p^2}}{\pqrfac{cp/aq}{n}{q^{-1},p^2}}.
\end{align}

We first give some remarks before the proof. Clearly, 
\eqref{E-indef-sum-1} reduces to \eqref{indef-sum-1} when ${p = 0}$. We cannot take $c=0$ in \eqref{E-indef-sum-1} while keeping the nome $p^2$, as $c=0$
appears as an essential singularity on each side of \eqref{E-indef-sum-1}.
The extra parameter $c$ ensures that the elliptic balancing condition
holds for the terms appearing in \eqref{indef-sum-1}. The way
the $q$-series identity \eqref{indef-sum-1} is extended to the elliptic
identity in \eqref{E-indef-sum-1} is analogous to the way the
of the $q$-Saalsch\"utz summation is extended to the elliptic case
as described in \cite[Sec.\ 11.4, p.~323]{GR90}.
Notice that the indefinite summation
\eqref{E-indef-sum-1} can also be obtained by telescoping (just as
\eqref{indef-sum-1}).

\begin{proof}[Proof of \eqref{E-indef-sum-1}]
  A direct way to obtain \eqref{E-indef-sum-1} is to deduce it from
  the Frenkel and Turaev $_{10}V_9$ summation \cite[Eq.\ (11.4.1)]{GR90},
  which is an elliptic analogue of Jackson's very-well-poised
  $_{8}\phi_{7}$ summation.
  Specifically, taking ${e \to aq^{n+1}}$ in \cite[Equation (11.4.1)]{GR90}
  we obtain 
\begin{equation*}
\sum_{k=0}^{n} \frac{\theta(aq^{2k};p)}{\theta(a;p)} \frac{\pqrfac{a,b,c,a/bc}{k}{q,p}}{\pqrfac{q, aq/b, aq/c, bcq}{k}{q,p}} q^k = \frac{\pqrfac{aq,bq,cq,aq/bc}{n}{q,p}}{\pqrfac{q, aq/b, aq/c, bcq}{n}{q,p}}.
\end{equation*}
Now replace $p$ by $p^2$ and subsequently replace $c$  by $cp$ 
and use
\begin{equation*}
\frac{\pqrfac{a/bcp}{k}{q,p^2}}{\pqrfac{aq/cp}{k}{q,p^2}}
= \frac{1}{b^kq^k}\frac{\pqrfac{bcp/a}{k}{q^{-1},p^2}}{\pqrfac{cp/aq}{k}{q^{-1},p^2}}.
\end{equation*}
This immediately gives \eqref{E-indef-sum-1}.
\end{proof} 


It is easy to use \eqref{E-indef-sum-1} to obtain elliptic extensions
of results from \cite{MS2004}. However, these results necessarily
have the additional parameter $c$, which cannot be specialized to
$0$ or $\infty$ before letting $p=0$. As an example,
we give another extension of Warnaar's result in \cite[Equation~(2)]{SOW2004},
which is a $q$-analogue of the sum of cubes. 

Replace $n$ by $n-1$, shift the index of summation $k \to k-1$,
and set $a=b=q^2$ in \eqref{E-indef-sum-1} to obtain:
\begin{align*}
\sum_{k=1}^{n} \frac{\theta(q^{2k};p^2)}{\theta(q^2;p^2)} \frac{\pqrfac{q^2, q^2, cp}{k-1}{q;p^2}}{\pqrfac{q, q, cpq^3}{k-1}{q,p^2}} \frac{\pqrfac{cp}{k-1}{q^{-1},p^2}}{\pqrfac{cp/q^3}{k-1}{q^{-1},p^2}} q^{2(n-k)}\cr
= \frac{\pqrfac{q^3, q^3, cpq}{n-1}{q,p^2}}{\pqrfac{q, q,  cpq^3}{n-1}{q,p^2}}\frac{\pqrfac{cp/q}{n-1}{q^{-1},p^2}}{\pqrfac{cp/q^3}{n-1}{q^{-1},p^2}}.\cr
\end{align*}
When $p=0$, this reduces to \eqref{warnaar-cubes}.

\begin{Remark}
A special case of \eqref{indef-sum-1} is the 
following $q$-analogue of the formula for the sum of the first $n$ odd numbers
(cf.\ \cite[Equation (3.9)]{MS2004}):
\begin{equation}\label{qodds}
\sum_{k=0}^{n-1}[2k+1]_qq^{-k}
=[n]_q^2q^{1-n}.
\end{equation}
An extension of \eqref{qodds} to cubic basic hypergeometric series
can be given as follows:
\begin{equation}
\sum_{k=0}^{n-1} q^{-k}
\frac{(aq;q^3)_k}{(aq^5;q^3)_k}
\frac{(1-q^{2k+1})}{1-q}\frac{(1-aq^{2k+1})^2}{(1-aq)^2}
=
\frac{(1-q^n)^2(1-aq^n)}{(1-q)^2(1-aq)}
\frac{\pqrfac{aq^4}{n-1}{q^3}}{\pqrfac{aq^5}{n-1}{q^3}}
q^{1-n}.
\end{equation}
(For $a\to 0$ this reduces to \eqref{qodds}.)
It is easy to verify that this sum telescopes. 
\end{Remark}

\begin{Remark}
  Another general indefinite elliptic summation that can
  be specialized to obtain various extensions of classical results
  is the following
  special case of a multibasic theta function identity by Gasper and
  Schlosser~\cite[Equation (3.19), $t=q$]{GS2005}:
\begin{align}\label{m00}
&\sum^n_{k=0}
\frac{\ta (ad(rs)^k, br^k/dq^k, cs^k/dq^k;p)}
{\ta (ad, b/d, c/d;p)}\cr
&\quad\times\frac{(ad^2/bc;q,p)_k  (b;r,p)_k (c;s,p)_k (a;rs/q,p)_k
}
{(dq;q,p)_k (adr/c;r,p)_k (ads/b;s,p)_k 
(bcrs/dq;rs/q,p)_k}q^k\cr
&=\frac{\ta(a, b, c, ad^2/bc;p)}{d\,\ta(ad, b/d, c/d, ad/bc;p)}\cr
&\quad\times
\frac{ (ad^2q/bc;q,p)_n (br;r,p)_n (cs;s,p)_n (ars/q;rs/q,p)_n}
{(dq;q,p)_n  (adr/c;r,p)_n (ads/b;s,p)_n (bcrs/dq;rs/q,p)_n}\cr
&\quad -\frac{\ta(d, ad/b, ad/c, bc/d;p)}
{d\,\ta(ad, b/d, c/d, ad/bc;p)}.
\end{align}
\end{Remark}

\section{The proof of Theorem~\ref{big-theorem} and some special cases}\label{sec4}
We have seen that telescoping leads to several elementary identities. All the telescoping identities are special cases of 
Euler's telescoping lemma, Lemma~\ref{E-telescoping-lemma}. In order to apply the telescoping lemma, we would like to use sequences
$u_k$, $v_k$ such that $t_k=u_k-v_k$ can be simplified. 

We now turn to the proof of Theorem~\ref{big-theorem}.
The motivation behind this theorem is to use \eqref{quad-rel} so that $t_k=u_k-v_k$ becomes an analogue of a factorized product of linear factors  in $k$. 


\begin{proof}[Proof of Theorem~\ref{big-theorem}]
 We combine Lemma~\ref{E-telescoping-lemma} with a special instance of the
  the difference equation \eqref{quad-rel}. 
Let  
\begin{align*}
x&=(gk-g+c)(hk+d),\\
y&=-(gk+c)(hk-h+d),\\
r&=2ghk+ch+dg.
\end{align*}
With this assignment of variables, we have $x+r=(gk+c)(hk+h+d)$,
$y-r=-(gk+g+c)(hk+d)$, and $r+x-y=2(gk+c)(hk+d)$.
Substituting these values into \eqref{quad-rel}, we have
\begin{align*}
&\big[(gk-g+c)(hk+d)\big]_{a,b;q,p}\,
\big[-(gk+c)(hk-h+d)\big]_{aq^{4(gk+c)(hk+d)},bq^{2gk+c)(hk+d)};q,p}\\
&-\big[(gk+c)(hk+h+d)\big]_{a,b;q,p}\,
\big[-(gk+g+c)(hk+d)\big]_{aq^{4(gk+c)(hk+d)},bq^{2(gk+c)(hk+d)};q,p}\\
&=\big[2(gk+c)(hk+d)\big]_{a,b;q,p}\,
\big[2ghk+ch+dg\big]_{aq^{2(gk-g+c)(hk+d)},bq^{(gk-g+c)(hk+d)};q,p}\\
&\quad\;\times
W_{aq^{4(gk+c)(hk+d)},bq^{2(gk+c)(hk+d)};q,p}\big(-(gk+g+c)(hk+d)\big),
\end{align*}
which is equivalent to
\begin{align*}
&-\big[(gk-g+c)(hk+d)\big]_{a,b;q,p}\,
\big[(gk+c)(hk-h+d)\big]_{aq^{2(gk+c)(hk+h+d)},bq^{(gk+c)(hk+h+d)};q,p}\\
&\quad\;\times
W_{aq^{2(gk+c)(hk+h+d)},bq^{(gk+c)(hk+h+d)};q,p}
\big((gk+c)(hk-h+d)\big)^{-1}\\
&+\big[(gk+c)(hk+h+d)\big]_{a,b;q,p}\,
\big[(gk+g+c)(hk+d)\big]_{aq^{2(gk-g+c)(hk+d)},bq^{(gk-g+c)(hk+d)};q,p}\\
&\quad\;\times
W_{aq^{2(gk-g+c)(hk+d)},bq^{(gk-g+c)(hk+d)};q,p}
\big((gk+g+c)(hk+d)\big)^{-1}\\
&=\big[2(gk+c)(hk+d)\big]_{a,b;q,p}\,
\big[2ghk+ch+dg\big]_{aq^{2(gk-g+c)(hk+d)},bq^{(gk-g+c)(hk+d)};q,p}\\
&\quad\;\times
W_{aq^{2(gk-g+c)(hk+d)},bq^{(gk-g+c)(hk+d)};q,p}\big((gk+g+c)(hk+d)\big)^{-1}.
\end{align*}
Multiplication of both sides of this relation by the factor
$$W_{aq^{2(gk-g+c)(hk+d)},bq^{(gk-g+c)(hk+d)};q,p}\big((gk+g+c)(hk+d)\big)$$
and application of the reduction
\begin{align*}
&\frac{W_{aq^{2(gk-g+c)(hk+d)},bq^{(gk-g+c)(hk+d)};q,p}\big((gk+g+c)(hk+d)\big)}
{W_{aq^{2(gk+c)(hk+h+d)},bq^{(gk+c)(hk+h+d)};q,p}\big((gk+c)(hk-h+d)\big)}\\
&=W_{aq^{2(gk-g+c)(hk+d)},bq^{(gk-g+c)(hk+d)};q,p}\big(2ghk+ch+dg\big)
\end{align*}
gives the identity
\begin{align}\label{eq:telescope}
&\big[(gk+c)(hk+h+d)\big]_{a,b;q,p}\,
\big[(gk+g+c)(hk+d)\big]_{aq^{2(gk-g+c)(hk+d)},bq^{(gk-g+c)(hk+d)};q,p}\notag\\
&-\big[(gk-g+c)(hk+d)\big]_{a,b;q,p}\,
\big[(gk+c)(hk-h+d)\big]_{aq^{2(gk+c)(hk+h+d)},bq^{(gk+c)(hk+h+d)};q,p}\notag\\
&\quad\;\times
W_{aq^{2(gk-g+c)(hk+d)},bq^{(gk-g+c)(hk+d)};q,p}\big(2ghk+ch+dg\big)\notag\\
&=\big[2(gk+c)(hk+d)\big]_{a,b;q,p}\,
\big[2ghk+ch+dg\big]_{aq^{2(gk-g+c)(hk+d)},bq^{(gk-g+c)(hk+d)};q,p}.
\end{align}
Thus, in order to apply Lemma~\ref{E-telescoping-lemma}, we let
\begin{align*}
t_k&=\big[2(gk+c)(hk+d)\big]_{a,b;q,p}\,
\big[2ghk+ch+dg\big]_{aq^{2(gk-g+c)(hk+d)},bq^{(gk-g+c)(hk+d)};q,p},\\
u_k&=\big[(gk+c)(hk+h+d)\big]_{a,b;q,p}\,
\big[(gk+g+c)(hk+d)\big]_{aq^{2(gk-g+c)(hk+d)},bq^{(gk-g+c)(hk+d)};q,p},\\
v_k&=\big[(gk-g+c)(hk+d)\big]_{a,b;q,p}\,
\big[(gk+c)(hk-h+d)\big]_{aq^{2(gk+c)(hk+h+d)},bq^{(gk+c)(hk+h+d)};q,p}\\
&\quad\;\times
W_{aq^{2(gk-g+c)(hk+d)},bq^{(gk-g+c)(hk+d)};q,p}\big(2ghk+ch+dg\big).
\end{align*}
Now by \eqref{eq:telescope} we have $t_k=u_k-v_k$, and \eqref{eq:E-telescoping-lemma}
gives the desired result.
\end{proof}

%

\subsection*{Some special cases of \eqref{bigid}}
\begin{enumerate}
\item An  $a;q$-analogue: take $p \to 0$ and  $b \to 0$.
\begin{multline}\label{spc-1}
\sum_{k=0}^n
\bigg(\frac{[2(gk+c)(hk+d)]_{a;q}[2ghk+ch+dg]_{aq^{2(gk-g+c)(hk+d)};q}}{[2cd]_{a;q}[ch+dg]_{aq^{2(c-g)d};q}}\\
\hspace{-1cm}\times
\prod_{j=0}^{k-1} \frac{[(gj+g+c)(hj+d)]_{aq^{2(gj-g+c)(hj+d)};q}}{[(gj+g+c)(hj+d)]_{aq^{2(gj+g+c)(hj+2h+d)};q}}\\ 
\times \prod_{j=0}^{k-1} W_{aq^{2(gj+c)(hj+h+d)};q}(2ghj+2gh+ch+dg)^{-1}\bigg)\\ 
= \frac{[(gn+c)(hn+h+d)]_{a;q}[(g+c)d]_{aq^{2(c-g)d};q}}{[2cd]_{a;q}[ch+dg]_{aq^{2(c-g)d};q}}
 \prod_{j=1}^{n} \frac{[(gj+g+c)(hj+d)]_{aq^{2(gj-g+c)(hj+d)};q}}{[(gj+c)(hj-h+d)]_{aq^{2(gj+c)(hj+h+d)};q}}\\
\hspace{-2cm}\times \prod_{j=1}^{n} W_{aq^{2(gj-g+c)(hj+d)};q}(2ghj+ch+dg)^{-1}\\
 - \frac{[(c-g)d]_{a;q}[c(d-h)]_{aq^{2c(h+d)};q}}{[2cd]_{a;q}[ch+dg]_{aq^{2(c-g)d};q}}
W_{aq^{2(c-g)d};q}(ch+dg).
\end{multline}
\item A $q$-analogue.
Take $a \to 0$ in \eqref{spc-1}. 
\begin{multline}\label{spc-2}
\sum_{k=0}^n \bigg(  
\frac{[2(gk+c)(hk+d)]_{q}[2ghk+ch+dg]_{q}}{[2cd]_{q}[ch+dg]_{q}}
q^{-\big( gh k^2+(ch+dg+gh)k\big)}
 \bigg)\\
= \frac{[(gn+c)(hn+h+d)]_{q}[(gn+g+c)(hn+d)]_{q}}{[2cd]_{q}[ch+dg]_{q}} q^{-\big(ghn^2 +\left(ch+dg+gh\right)n\big)}\\
-\frac{[c(d-h)]_{q}[(c-g)d]_{q}}{[2cd]_{q}[ch+dg]_{q}}q^{ch+dg}.
\end{multline}

\item 
We can further specialize $c,d,g$ and $h$ in \eqref{spc-2} to obtain more $q$-analogues, highlighted in \S\ref{intro}. In particular, we have the following:
\begin{enumerate}
\item Take $c,d,g \to 1$ and $h \to 0$, shift the index to run from $k=1$ to $n+1$, and replace $n+1$ by $n$ to obtain \eqref{spc-4(i)}.
\item Take $c,d,g,h \to 1$ to get \eqref{spc-4(ii)}.
\end{enumerate}
\end{enumerate}

\section*{Acknowledgements}
The research of Michael J.\ Schlosser was partially supported by the
Austrian Science Fund (FWF), grant~P~32305.
%
%




\begin{thebibliography}{10}

\bibitem{BCK2020a}
N.~Bergeron, C.~Ceballos, and J.~K\"{u}stner.
\newblock Elliptic and {$q$}-analogs of the {F}ibonomial numbers.
\newblock {\em S\'{e}m. Lothar. Combin.}, 84B:Art. 63, 12, 2020.

\bibitem{BCK2020b}
N.~Bergeron, C.~Ceballos, and J.~K\"{u}stner.
\newblock Elliptic and {$q$}-analogs of the {F}ibonomial numbers.
\newblock {\em SIGMA Symmetry Integrability Geom. Methods Appl.}, 16:Paper No.
  076, 16, 2020.

\bibitem{DB2018}
D.~Betea.
\newblock Elliptically distributed lozenge tilings of a hexagon.
\newblock {\em SIGMA Symmetry Integrability Geom. Methods Appl.}, 14:Paper No.
  032, 39, 2018.

\bibitem{GB2011}
G.~Bhatnagar.
\newblock In praise of an elementary identity of {E}uler.
\newblock {\em Electron. J. Combin.}, 18(2):Paper 13, 44pp, 2011.

\bibitem{BKS2023a}
G.~Bhatnagar, A.~Kumari, and M.~J. Schlosser.
\newblock A weighted extension of {F}ibonacci numbers.
\newblock {\em J. Difference Equ. Appl.}, 29(7):733--747, 2023.

\bibitem{BGR2010}
A.~Borodin, V.~Gorin, and E.~M. Rains.
\newblock {$q$}-{D}istributions on boxed plane partitions.
\newblock {\em Selecta Math. (N.S.)}, 16(4):731--789, 2010.

\bibitem{Cigler2014}
J.~Cigler.
\newblock Sum of cubes: Old proofs suggest new $q$-analogues, 2014.
\newblock \href{https://arxiv.org/abs/1403.6609.pdf}{arXiv:1403.6609}.

\bibitem{GH2004}
K.~C. Garrett and K.~Hummel.
\newblock A combinatorial proof of the sum of {$q$}-cubes.
\newblock {\em Electron. J. Combin.}, 11(1):Research Paper 9, 6, 2004.

\bibitem{GR90}
G.~Gasper and M.~Rahman.
\newblock {\em Basic {H}ypergeometric {S}eries}, volume~96 of {\em Encyclopedia
  of Mathematics and its Applications}.
\newblock Cambridge University Press, Cambridge, second edition, 2004.
\newblock With a foreword by Richard Askey.

\bibitem{GS2005}
G.~Gasper and M.~Schlosser.
\newblock Summation, transformation, and expansion formulas for multibasic
  theta hypergeometric series.
\newblock {\em Adv. Stud. Contemp. Math. (Kyungshang)}, 11(1):67--84, 2005.

\bibitem{HKKS2023}
N.~Hoshi, M.~Katori, T.~H. Koornwinder, and M.~J. Schlosser.
\newblock On an identity of {C}haundy and {B}ullard {III}. {B}asic and elliptic
  extensions, 2023.
\newblock \href{https://doi.org/10.48550/arXiv.2304.10003}{arXiv.2304.10003}.

\bibitem{HR2016-lectures}
H.~{Rosengren}.
\newblock {Elliptic Hypergeometric Functions}.
\newblock In {\em Lectures on Orthogonal Polynomials and Special Functions
  (Cohl, H. S. and Ismail, M. E. H. (eds.)}, pages 213--279. Cambridge
  University Press, 2020.

\bibitem{MS2004}
M.~J. Schlosser.
\newblock {$q$}-{A}nalogues of the sums of consecutive integers, squares,
  cubes, quarts and quints.
\newblock {\em Electron. J. Combin.}, 11(1):Research Paper 71, 12, 2004.

\bibitem{MS2007}
M.~J. Schlosser.
\newblock Elliptic enumeration of nonintersecting lattice paths.
\newblock {\em J. Combin. Theory Ser. A}, 114(3):505--521, 2007.

\bibitem{MS2020b}
M.~J. Schlosser.
\newblock A noncommutative weight-dependent generalization of the binomial
  theorem.
\newblock {\em S\'{e}m. Lothar. Combin.}, 81:Art. B81j, 24, 2020.

\bibitem{MS2023}
M.~J. Schlosser.
\newblock An elliptic extension of the multinomial theorem, 2023.
\newblock \href{https://doi.org/10.48550/arXiv.2307.12921}{arXiv.2307.12921}.

\bibitem{SSU2021}
M.~J. Schlosser, K.~Senapati, and A.~K. Uncu.
\newblock Log-concavity results for a biparametric and an elliptic extension of
  the {$q$}-binomial coefficients.
\newblock {\em Int. J. Number Theory}, 17(3):787--804, 2021.

\bibitem{SY2015}
M.~J. Schlosser and M.~Yoo.
\newblock Some combinatorial identities involving noncommuting variables.
\newblock In {\em Proceedings of {FPSAC} 2015}, Discrete Math. Theor. Comput.
  Sci. Proc., pages 961--972. Assoc. Discrete Math. Theor. Comput. Sci., Nancy,
  2015.

\bibitem{SY2016b}
M.~J. Schlosser and M.~Yoo.
\newblock An elliptic extension of the general product formula for augmented
  rook boards.
\newblock {\em European J. Combin.}, 58:247--266, 2016.

\bibitem{SY2016a}
M.~J. Schlosser and M.~Yoo.
\newblock Elliptic rook and file numbers.
\newblock In {\em 28th {I}nternational {C}onference on {F}ormal {P}ower
  {S}eries and {A}lgebraic {C}ombinatorics ({FPSAC} 2016)}, Discrete Math.
  Theor. Comput. Sci. Proc., BC, pages 1087--1098. Assoc. Discrete Math. Theor.
  Comput. Sci., Nancy, 2016.

\bibitem{SY2017b}
M.~J. Schlosser and M.~Yoo.
\newblock Elliptic extensions of the alpha-parameter model and the rook model
  for matchings.
\newblock {\em Adv. in Appl. Math.}, 84:8--33, 2017.

\bibitem{SY2017a}
M.~J. Schlosser and M.~Yoo.
\newblock Elliptic rook and file numbers.
\newblock {\em Electron. J. Combin.}, 24(1):Paper No. 1.31, 47, 2017.

\bibitem{SY2018}
M.~J. Schlosser and M.~Yoo.
\newblock Weight-dependent commutation relations and combinatorial identities.
\newblock {\em Discrete Math.}, 341(8):2308--2325, 2018.

\bibitem{SY2021}
M.~J. Schlosser and M.~Yoo.
\newblock Elliptic solutions of dynamical {L}ucas sequences.
\newblock {\em Entropy}, 23(2):Paper No. 183, 14, 2021.

\bibitem{SOW2004}
S.~O. Warnaar.
\newblock On the {$q$}-analogue of the sum of cubes.
\newblock {\em Electron. J. Combin.}, 11(1):Note 13, 2, 2004.

\bibitem{WW1996}
E.~T. Whittaker and G.~N. Watson.
\newblock {\em A {C}ourse of {M}odern {A}nalysis}.
\newblock Cambridge Mathematical Library. Cambridge University Press,
  Cambridge, 1996.
\newblock An introduction to the general theory of infinite processes and of
  analytic functions; with an account of the principal transcendental
  functions, Reprint of the fourth (1927) edition.

\end{thebibliography}
	
\end{document}